\theoremstyle{definition}
\newtheorem{theorem}{Theorem}[section]
\theoremstyle{remark}
\newcommand{\diff}[1]{\,\mathrm{d}#1}
\newcommand{\R}{{\mathbb{R}}}
\newcommand{\E}{{\mathbb{E}}}
\newcommand{\N}{{\mathbb{N}}}
\newcommand{\F}{{\mathcal{F}}} 
\renewcommand{\P}{{\mathbb{P}}} 
\title{A randomised trapezoidal quadrature}
\author{
  Yue Wu \footnote{
  Mathematical Institute,
  University of Oxford,
  Oxford,  OX2 6GG, UK; 
  Alan Turing Institute,
  London, NW1 2DB, UK 
  \texttt{yue.wu@maths.ox.ac.uk}}}
\begin{document}

\maketitle
\begin{abstract}
  A randomised trapezoidal quadrature rule is proposed for continuous functions which enjoys less regularity than commonly required. Indeed, we consider functions in some fractional Sobolev space. Various error bounds for this randomised rule are established while an error bound for classical trapezoidal quadrature is obtained for comparison. The randomised trapezoidal quadrature rule is shown to improve the order of convergence by half. 
  \vspace{0.2cm}
  
\noindent \textbf{Keywords.} Randomised trapezoidal quadrature, Fractional sobolev space, Almost sure convergence, $L^p$ convergence.
  \vspace{0.2cm}
  
\noindent \textbf{MSC2020.} 65C05, 65D30.
\end{abstract}

\section{Introduction}\label{sec;intro}
It is well known that the trapezoidal quadrature in classical numerical analysis is a technique for approximating $\mathbb{R}^d$-valued definite integral when the integrand is at least twice differentiable. Without loss of generality we consider the time interval is $[0,T]$ and $g\in C^{2}([0,T])$ is the integrand of interest, where $C^{2}([0,T]):=C^{2}([0,T];\mathbb{R}^d)$ is the space of $\mathbb{R}^d$-continuous functions, endowed with the uniform norm topology, that have continuous first two derivatives. The trapezoidal quadrature is proven to achieve order of convergence $2$ for evaluating the integral $I[g]:=\int_0^T g(t)\diff{t}$ with finite many point evaluations \cite{2007integration}. To implement this, partition the interval $[0,T]$ into $N$ equidistant intervals with stepsize $h_N=\frac{T}{N}$, i.e., 
\begin{equation}\label{eqn:partition}
    \Pi_h:=\{t_j:=jh\}_{j=0}^N\subset [0,T],
\end{equation}
where the subscription $N$ is suppressed in h for the
sake of notational simplicity but
assumed implicitly in all of the quantities introduced involving $h$. Define
\begin{equation}\label{eqn:trapezoidal}
   Q_h[g]:=\frac{h}{2}\sum_{i=0}^{N-1}\big(g(t_i)+g(t_{i+1})\big).
\end{equation}
When $g$ has less regularity, trapezoidal quadrature will show a slower convergence and a sharp bound \cite{2002sharp}. To accelerate its convergence when $g$ is 'rougher', we consider a \emph{randomised trapezoidal quadrature}, which is inspired by the randomised version of mid-point Runge-Kunta quadrature rule \cite{2017rk} and stochastic version of trapezoidal quadrature for It\^o integral \cite{2018tra}. In this paper, the $\mathbb{R}^d$-valued target function $g$ is assumed to be in fractional Sobolev space $W^{\sigma,p}(0,T)$ under Sobolev-Slobodeckij norm: 
\begin{equation}
    \|g\|_{W^{\sigma,p}(0,T)}=\big(\int_0^T |g(t)|^p \diff{t}+\int_0^T |\dot{g}(t)|^p \diff{t}+\int_0^T\int_0^T\frac{|\dot{g}(t)-\dot{g}(s)|^p}{|t-s|^{1+(\sigma-1)p}}\diff{t}\diff{s}\big)^{\frac{1}{p}},
\end{equation}
for $\sigma\in (1,2)$ and $p\in [2,\infty)$. We may write $\|g\|_{W^{\sigma,p}(0,T)}$ as $\|g\|_{W^{\sigma,p}}$ for short.
Let us define a randomised trapezoidal quadrature,
\begin{equation}\label{eqn:random trap}
   RQ^{\tau,n}_h[g]:=\frac{h}{2}\sum_{i=0}^{n-1}\big(g(t_i+\tau_ih)+g(t_{i}+\bar{\tau}_ih)\big) \mbox{ for }n\in[N],
\end{equation}
where $\{\tau_i\}_{i=0}^{N-1}$ is a sequence of independent and identically (i.i.d.) uniformly distributed random variables on a probability space $(\Omega,\mathcal{F},\mathbb{P})$, $\bar{\tau}_i:=1-\tau_i$ and $[N]:=\{1,\ldots,N\}$. The main result, Theorem \ref{thm:random_trap}, shows that the convergence rate can be improved to $\mathcal{O}(N^{-\sigma-\frac{1}{2}})$ compared to $\mathcal{O}(N^{-\sigma})$ achieved by classical trapezoidal quadrature (Theorem \ref{thm:general_trap}).

The paper is organized as follows. In Section 2 we present
some prerequisites from probability theory. In Section 3 we state and prove error estimates
for both classical trapezoidal quadrature and randomised trapezoidal quadrature. In addition, we also investigate the error estimate in almost sure sense for randomised trapezoidal quadrature in Theorem \ref{th3:Rie_pathwise}, which is proven still superior to classical one. In the last section, we verify the results through several numerical experiments.

\section{Preliminaries}
This section is devoted to a briefly review on essential probability results for audience who are not familiar with probability theory. Most of the contents are repeated material from Section 2 in \cite{2017rk}. One may refer to \cite{2000measure} for a more detailed introduction.

Recall that a \emph{probability space} $(\Omega,\mathcal{F},\P)$
consists of a measurable space $(\Omega,\mathcal{F})$ endowed with a finite
measure $\P$ satisfying $\P(\Omega) = 1$. A random variable $X \colon \Omega \to \R^d$ is
called \emph{integrable} if $\int_{\Omega}|X(\omega)| \diff{\P(\omega)}<\infty$. 
Then, the \emph{expectation} of $X$ is defined as
$$\E[X]:=\int_{\Omega}X(\omega)\diff{\P(\omega)} = \int_{\R^d} x
\diff{\mu_{X}(x)},$$
where $\mu_X$ is distribution of X on its image space. We write $X \in L^p(\Omega;\R^d)$ with $p \in [1,\infty)$ if 
$\int_{\Omega}|X(\omega)|^p \diff{\P(\omega)}<\infty$, where
$L^p(\Omega;\R^d)$ is a Banach space endowed with the norm
\begin{align*}
  \| X \|_{L^p(\Omega;\R^d)} = \big( \E \big[ |X|^p \big]
  \big)^{\frac{1}{p}} = \Big( \int_{\Omega}|X(\omega)|^p \diff{\P(\omega)}
  \Big)^{\frac{1}{p}}.
\end{align*}
We will write $\|X \|_{L^p(\Omega;\R)}$ as $\| X \|_{L^p(\Omega)}$ for short.

We say that a 
family of $\R^d$-valued random variables $(X_m)_{m \in \N}$ is a discrete time
\emph{stochastic process} if we interpret the index $m$ as a time parameter. An crucial concept in our main proof is \emph{martingales}, which is a special case of discrete time stochastic process with many nice properies. If $(X_m)_{m \in \N}$ is an independent family of integrable random variables
satisfying $\E [X_m] = 0$ for each $m \in \N$, then the stochastic process
defined by the partial sums  
\begin{align*}
  S_n := \sum_{m = 1}^n X_m, \quad n \in \N, 
\end{align*}
is a discrete time martingale. One of the most important inequalities for martingale Burkholder–Davis–Gundy inequality. In this paper we need its discrete time version.

\begin{theorem}[Burkholder–Davis–Gundy inequality]
  \label{th:BDG}
  For each $p \in (1,\infty)$ there exist positive constants $c_p$ and $C_p$
  such that for every discrete time martingale $(X_n)_{n \in \mathbb{N}}$ and
  for every $n \in \mathbb{N}$ we have 
  $$c_p \| [X]_{n}^{{1}/{2}} \|_{L^p(\Omega)}
  \leq \big\| \max_{j\in \{1,\ldots,n\} } |X_j| \big\|_{L^p(\Omega)} 
  \leq C_p \big\| [X]_{n}^{{1}/{2}} \big\|_{L^p(\Omega)},$$
  where $[X]_n = |X_1|^2 + \sum_{k=2}^{n} |X_{k}-X_{k-1}|^2$ denotes the
	\emph{quadratic variation} of $(X_n)_{n \in \mathbb{N}}$ up to $n$.
\end{theorem}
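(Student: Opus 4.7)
The plan is to reduce the two-sided estimate to Burkholder's square-function inequality via Doob's maximal $L^p$-inequality. Writing $M_n := \max_{j \le n} |X_j|$, Doob gives $\|M_n\|_{L^p(\Omega)} \le \frac{p}{p-1}\|X_n\|_{L^p(\Omega)}$, while $\|X_n\|_{L^p(\Omega)} \le \|M_n\|_{L^p(\Omega)}$ is trivial. Hence it suffices to produce constants $a_p, A_p > 0$ with
$$a_p\, \bigl\|[X]_n^{1/2}\bigr\|_{L^p(\Omega)} \;\le\; \|X_n\|_{L^p(\Omega)} \;\le\; A_p\, \bigl\|[X]_n^{1/2}\bigr\|_{L^p(\Omega)}.$$

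I would first settle the case $p = 2$ as a warm-up: writing the differences $d_k := X_k - X_{k-1}$ (with $d_1 := X_1$), the tower property of conditional expectation yields $\E[d_j d_k] = 0$ for $j \ne k$, so $\E[X_n^2] = \sum_{k=1}^n \E[d_k^2] = \E[[X]_n]$, which gives $a_2 = A_2 = 1$. For general $p \in (1,\infty)$ I would follow Burkholder's good-$\lambda$ route: with stopping times taken at the first time either $|X_j|$ exceeds $\lambda$ or $[X]_j^{1/2}$ exceeds $\delta \lambda$, one establishes that for some fixed $\beta > 1$ and all sufficiently small $\delta > 0$,
$$\P\bigl(M_n > \beta \lambda,\ [X]_n^{1/2} \le \delta \lambda\bigr) \;\le\; \varepsilon(\delta)\, \P\bigl(M_n > \lambda\bigr),$$
with $\varepsilon(\delta) \to 0$ as $\delta \to 0$. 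Multiplying by $p \lambda^{p-1}$, integrating over $\lambda \in (0,\infty)$, and choosing $\delta$ small enough to absorb the prefactor on the right-hand side yields the upper bound $A_p$; the lower bound $a_p$ comes from the symmetric good-$\lambda$ inequality with the roles of $M_n$ and $[X]_n^{1/2}$ exchanged, or equivalently by $L^p$--$L^{p/(p-1)}$ duality.

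The main technical obstacle is the good-$\lambda$ step itself: the conditional estimate has to be established on the event that $|X_j|$ has just crossed $\lambda$ by combining the martingale property with a Chebyshev-type bound on a truncated conditional quadratic variation, and the jumps at the stopping times must be handled carefully so they do not destroy the estimate. A cleaner alternative in the discrete setting is the Rademacher randomisation route: introduce an independent sign sequence $(\varepsilon_k)$, apply Khintchine's inequality conditionally to compare $\|\sum_k \varepsilon_k d_k\|_{L^p(\Omega)}$ with $\|[X]_n^{1/2}\|_{L^p(\Omega)}$, and use Burkholder's martingale-transform inequality to pass back from $\sum_k \varepsilon_k d_k$ to $X_n$ in $L^p(\Omega)$. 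Either route produces constants $c_p, C_p$ that blow up as $p \to 1$ or $p \to \infty$, consistent with the restriction $p \in (1,\infty)$ imposed in the statement.
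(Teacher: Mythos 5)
The paper does not prove this statement at all: Theorem~\ref{th:BDG} sits in the preliminaries as a quoted classical result (the section explicitly says the material is repeated from \cite{2017rk} and points to \cite{2000measure}), so there is no in-paper argument to compare yours against. Judged on its own terms, your outline is the standard textbook route and the reductions are sound: Doob's maximal inequality correctly replaces $\max_{j\le n}|X_j|$ by $|X_n|$ up to a factor $p/(p-1)$ for $p\in(1,\infty)$; the $p=2$ case by orthogonality of the increments is complete; and your worry about jumps in the good-$\lambda$ step is in fact harmless here, since $|d_k|\le [X]_n^{1/2}$ automatically, so the maximal jump is controlled on the event $\{[X]_n^{1/2}\le\delta\lambda\}$. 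The absorption step is also legitimate because $[X]_n\ge \tfrac1n|X_n|^2$ guarantees the a priori finiteness of $\E[M_n^p]$ whenever the right-hand side is finite.

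The genuine gap is that the one step carrying all of the content is asserted rather than proved. The good-$\lambda$ inequality $\P(M_n>\beta\lambda,\ [X]_n^{1/2}\le\delta\lambda)\le\varepsilon(\delta)\P(M_n>\lambda)$ \emph{is} the Burkholder--Davis--Gundy theorem in disguise; everything surrounding it (Doob, integration against $p\lambda^{p-1}\,\mathrm{d}\lambda$, absorption) is routine. You acknowledge this yourself by calling it ``the main technical obstacle,'' but you do not supply the stopping-time construction and the conditional Chebyshev estimate that establish it. The same remark applies to your alternative route: Khintchine's inequality conditionally on the filtration is fine, but ``Burkholder's martingale-transform inequality'' is again a theorem of comparable depth that you invoke without proof. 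So the proposal is a correct and well-organised roadmap to a known proof, not a self-contained one; for the purposes of this paper that is arguably the right level of detail, since the authors themselves treat the result as a black box.
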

\section{Trapezoidal quadratures for a rougher integrand}
This section investigate the errors from trapezoidal rules for approximating integral of $g \in W^{\sigma,p}$. The error bound from classical trapezoidal rule is obtained in Section \ref{sec:ctr} and the ones from randomised trapezoidal rule is in Section \ref{sec:rtr}.
\subsection{Classical trapezoidal quadrature for $g\in W^{\sigma,p}$}\label{sec:ctr}
\begin{theorem}\label{thm:general_trap} If $g\in W^{\sigma,p}(0,T)$ for $\sigma\geq 1$, then we have
\begin{equation}\label{eqn:main bound1}
    |I[g]-Q_h[g]|\leq CT^{1-\frac{1}{p}}h^{\sigma}\|g\|_{W^{\sigma,p}(0,T)},
\end{equation}
where $C$ is a constant that only depends on $p$.
\end{theorem}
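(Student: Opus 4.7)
The plan is to obtain the bound by a localised Peano-kernel style argument on each subinterval, then glue the pieces together with Hölder's inequality, the whole point being to trade the factor $h$ coming from the kernel against the Slobodeckij seminorm weight $|t-s|^{-(1+(\sigma-1)p)}$.

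First I would write the exact integration on the $i$-th subinterval $[t_i,t_{i+1}]$ and integrate by parts with the kernel $v(t)=t-(t_i+t_{i+1})/2$, which vanishes in mean on $[t_i,t_{i+1}]$. This yields the local error
\begin{equation*}
E_i := \int_{t_i}^{t_{i+1}} g(t)\diff{t} - \tfrac{h}{2}\bigl(g(t_i)+g(t_{i+1})\bigr) = -\int_{t_i}^{t_{i+1}} v(t)\,\dot g(t)\diff{t}.
\end{equation*}
Since $\int_{t_i}^{t_{i+1}} v(t)\diff{t}=0$, I can subtract the $s$-average of $\dot g$ inside the integrand and rewrite
\begin{equation*}
E_i = -\frac{1}{h}\int_{t_i}^{t_{i+1}}\!\!\int_{t_i}^{t_{i+1}} v(t)\bigl(\dot g(t)-\dot g(s)\bigr)\diff{s}\diff{t},
\end{equation*}
which already has the correct increment $\dot g(t)-\dot g(s)$ for the Slobodeckij seminorm to appear.

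Next I would apply Hölder's inequality in the $(t,s)$ double integral, splitting the integrand as $|\dot g(t)-\dot g(s)|\cdot|t-s|^{-(1+(\sigma-1)p)/p}$ times $|t-s|^{(1+(\sigma-1)p)/p}$, with the bound $|v(t)|\le h/2$. The second factor, raised to $p/(p-1)$ and integrated over the square $[t_i,t_{i+1}]^2$, is a routine $|t-s|^q$ integral that produces a clean $h^{(q+2)(p-1)/p}=h^{\sigma+1-1/p}$; combined with the $h/2$ and the $1/h$ prefactor this gives
\begin{equation*}
|E_i| \le C\,h^{\sigma+1-1/p}\left(\int_{t_i}^{t_{i+1}}\!\!\int_{t_i}^{t_{i+1}}\frac{|\dot g(t)-\dot g(s)|^p}{|t-s|^{1+(\sigma-1)p}}\diff{s}\diff{t}\right)^{1/p}.
\end{equation*}

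Finally I would sum $i=0,\dots,N-1$ and apply the discrete Hölder inequality on the sum (exponents $p$ and $p/(p-1)$), which pulls out a factor $N^{1-1/p}$ and leaves the full double integral over $[0,T]^2$, hence at most $\|g\|_{W^{\sigma,p}}$. Using $Nh=T$, the combined factor is $h^{\sigma+1-1/p}N^{1-1/p}=T^{1-1/p}h^\sigma$, which is exactly \eqref{eqn:main bound1}. The only real piece of arithmetic to watch is the exponent bookkeeping $(q+2)(p-1)/p=\sigma+1-1/p$ with $q=(1+(\sigma-1)p)/(p-1)$; everything else is standard. I expect no conceptual obstacle: the structural reason for the rate $h^\sigma$ is that the kernel $v$ has zero mean, so one effectively sees a first-order difference of $\dot g$ and therefore the full $\sigma-1$ fractional regularity of $\dot g$ is exploited.
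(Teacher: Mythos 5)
Your proposal is correct, and it reaches the bound by a cleaner route than the paper. The paper first rewrites $g(t_i)+g(t_{i+1})$ around the midpoint value $2g(t_{i+\frac{1}{2}})$, splitting the local error into two pieces $E_1^{i,i+1}$ and $E_2^{i,i+1}$, and then removes a vanishing correction term ($V_i$, resp.\ $P_i$) from each piece so that only increments $\dot g(r)-\dot g(s)$ survive; you instead obtain the local error in one stroke via integration by parts against the Peano kernel $v(t)=t-t_{i+\frac{1}{2}}$, and the single identity $\int_{t_i}^{t_{i+1}}v(t)\,\mathrm{d}t=0$ plays the role of both $V_i$ and $P_i$. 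From that point on the two arguments coincide in substance: H\"older against the Slobodeckij weight on each square $[t_i,t_{i+1}]^2$ (your version keeps $|t-s|$ and integrates it exactly, the paper simply bounds $|t-s|\le h$ -- same exponent $\sigma+1-\frac{1}{p}$ either way), then discrete H\"older over $i$ to trade $N^{1-\frac1p}h^{1-\frac1p}$ for $T^{1-\frac1p}$. What your version buys is brevity for this deterministic theorem; what the paper's two-term decomposition buys is reusability, since the same $E_1^{i,i+1}$/$E_2^{i,i+1}$ splitting is recycled verbatim in the proof of the randomised Theorem~\ref{thm:random_trap}, where the midpoint $t_{i+\frac12}$ genuinely matters. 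One small point to tidy up: your weight-splitting step presumes $\sigma>1$; for the boundary case $\sigma=1$ the Slobodeckij term is not part of the norm as defined, and you should instead bound $|\dot g(t)-\dot g(s)|\le|\dot g(t)|+|\dot g(s)|$ and run the same H\"older/summation steps with $\|\dot g\|_{L^p}$, exactly as the paper does in its separate $\sigma=1$ case; this is a one-line fix and not a gap in the idea.
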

\begin{proof}

To show Eqn. \eqref{eqn:main bound1}, we follow \cite{2018tra} to rewrite 
\begin{equation}\label{eqn:rewrite}
    g(t_i)+g(t_{i+1})=2g(t_{i+\frac{1}{2}})+\int_{t_{i+\frac{1}{2}}}^{t_i}\dot{g}(s)\diff{s}+\int_{t_{i+\frac{1}{2}}}^{t_{i+1}}\dot{g}(s)\diff{s},
\end{equation}
where $t_{i+\frac{1}{2}}:=\frac{1}{2}(t_i+t_{i+1})$. Then the LHS of Eqn. \eqref{eqn:main bound1} can be rewritten as
\begin{equation*}
    I[g]-Q_h[g]=\sum_{i=0}^{N-1}E_1^{i,i+1}+\sum_{i=0}^{N-1}E_2^{i,i+1},
\end{equation*}
where
\begin{equation}\label{eqn:e1}
    E^{i,i+1}_1:=\int_{t_i}^{t_{i+1}}(g(t)-g(t_{i+\frac{1}{2}}))\diff{t}=\frac{1}{2}\int_{t_i}^{t_{i+1}}\int_{t_{i+\frac{1}{2}}}^{t}\dot{g}(r)\diff{r}\diff{t},
\end{equation}
and
\begin{equation}\label{eqn:e2}
    E^{i,i+1}_2:=\frac{1}{2}\int_{t_i}^{t_{i+1}}\big(\int_{t_{i+\frac{1}{2}}}^{t_i}\dot{g}(s)\diff{s}+\int_{t_{i+\frac{1}{2}}}^{t_{i+1}}\dot{g}(s)\diff{s}\big)\diff{t}.
\end{equation}
Regarding $E_1^{i,i+1}$, first note that
\begin{align*}
   &V_i:= \frac{1}{h}(g(t_{i+1})-g(t_{i}))\int_{t_{i}}^{t_{i+1}}(t-t_{i+\frac{1}{2}})\diff{t}\\
   &= \frac{1}{h}\int_{t_{i}}^{t_{i+1}}\int_{t_{i+\frac{1}{2}}}^{t}\int_{t_{i}}^{t_{i+1}}\dot{g}(s)\diff{s}\diff{r}\diff{t}=0.
\end{align*}
Then we can rewrite $E_1^{i,i+1}$ as
\begin{align}\label{eqn:e1 alterntive}
\begin{split}
      &  E_1^{i,i+1}=E_1^{i,i+1}-V_i=\frac{1}{h}\int_{t_i}^{t_{i+1}}\int_{t_i}^{t_{i+1}}\int_{t_{i+\frac{1}{2}}}^{t}\dot{g}(r)\diff{r}\diff{s}\diff{t}-V_i\\
  &=\frac{1}{h}\sum_{i=0}^{N-1}\int_{t_i}^{t_{i+1}}\int_{t_i}^{t_{i+1}}\int_{t_{i+\frac{1}{2}}}^{t}(\dot{g}(r)-\dot{g}(s))\diff{r}\diff{s}\diff{t}.
\end{split}
\end{align}
Thus evaluating $\sum_{i=0}^{N-1}E_1^{i,i+1}$ under $L^p$ norm gives
\begin{align}\label{eqn:intermediate_bound}
\begin{split}
   & \Big|\sum_{i=0}^{N-1}E_1^{i,i+1}\Big|\leq  \sum_{i=0}^{N-1}\int_{t_i}^{t_{i+1}}\int_{t_i}^{t_{i+1}}|\dot{g}(r)-\dot{g}(s)|\diff{r}\diff{s}\\
   &\leq \sum_{i=0}^{N-1}h^{\frac{2}{q}}\Big(\int_{t_i}^{t_{i+1}}\int_{t_i}^{t_{i+1}}|\dot{g}(r)-\dot{g}(s)|^p\diff{r} \diff{s}\Big)^{\frac{1}{p}},
\end{split}
\end{align}
where the second line is deduced by applying H\"older's inequality twice, and $\frac{1}{q}:=1-\frac{1}{p}$.
For the case $\sigma=1$ and any $p\geq 2$, we may directly apply the discrete H\"older's inequality to the last term above:
\begin{align*}
    &\sum_{i=0}^{N-1}h^{\frac{2}{q}}\Big(\int_{t_i}^{t_{i+1}}\int_{t_i}^{t_{i+1}}|\dot{g}(r)-\dot{g}(s)|^p\diff{r} \diff{s}\Big)^{\frac{1}{p}}
    \leq C\sum_{i=0}^{N-1}h^{\frac{2}{q}+\frac{1}{p}}\Big(\int_{t_i}^{t_{i+1}}|\dot{g}(r)|^p\diff{r}\Big)^{\frac{1}{p}}\\
    &\leq Ch \big(\sum_{i=0}^{N-1}h\big)^\frac{1}{q}\Big(\sum_{i=0}^{N-1}\int_{t_i}^{t_{i+1}}|\dot{g}(r)|^p\diff{r}\Big)^{\frac{1}{p}}=ChT^{1-\frac{1}{p}}\|g\|_{W^{1,p}}.
\end{align*}
For the case $\sigma> 1$ and any $p \geq 2$, we may first make use of the definition of $W^{\sigma,p}$ and then apply the discrete H\"older's inequality:
\begin{align*}
&\sum_{i=0}^{N-1}h^{\frac{2}{q}}\Big(\int_{t_i}^{t_{i+1}}\int_{t_i}^{t_{i+1}}|\dot{g}(r)-\dot{g}(s)|^p\diff{r} \diff{s}\Big)^{\frac{1}{p}}\\
 &\leq \sum_{i=0}^{N-1}h^{\frac{2}{q}+\frac{1}{p}+\sigma-1}\Big(\int_{t_i}^{t_{i+1}}\int_{t_i}^{t_{i+1}}\frac{|\dot{g}(r)-\dot{g}(s)|^p}{|r-s|^{1+(\sigma-1)p}}\diff{r}\diff{s}\Big)^{\frac{1}{p}}\\
 &= h^{\sigma}\sum_{i=0}^{N-1}h^{\frac{1}{q}}\Big(\int_{t_i}^{t_{i+1}}\int_{t_i}^{t_{i+1}}\frac{|\dot{g}(r)-\dot{g}(s)|^p}{|r-s|^{1+(\sigma-1)p}}\diff{r}\diff{s}\Big)^{\frac{1}{p}}\leq h^{\sigma}T^{1-\frac{1}{p}}\|g\|_{W^{1,p}}.
\end{align*}
For term $E_2^{i,i+1}$, we can follow a similar argument in \cite{2018tra} to show that
\begin{equation}\label{eqn:e2 alternative}
    E_2^{i,i+1}=\frac{1}{2h}\int_{t_i}^{t_{i+1}}\int_{t_i}^{t_{i+1}}\Big(\int_{t_{i+\frac{1}{2}}}^{t_i}(\dot{g}(s)-\dot{g}(r))\diff{r}+\int_{t_{i+\frac{1}{2}}}^{t_{i+1}}(\dot{g}(s)-\dot{g}(r))\diff{r}\Big)\diff{s}\diff{t}.
\end{equation}
Indeed, note that $$(t_i-t_{i+\frac{1}{2}})+(t_{i+1}-t_{i+\frac{1}{2}})=0,$$
If defining a new process
\begin{align*}
   &P_i:=\frac{t_i-t_{i+\frac{1}{2}}}{2h}\int_{t_i}^{t_{i+1}}\int_{t_i}^{t_{i+1}}\dot{g}(r)\diff{r}\diff{t}+\frac{t_{i+1}-t_{i+\frac{1}{2}}}{2h}\int_{t_i}^{t_{i+1}}\int_{t_i}^{t_{i+1}}\dot{g}(r)\diff{r}\diff{t}, 
\end{align*}
then Eqn. \eqref{eqn:e2 alternative} can be obtained through the fact that
\begin{align*}
     &E_2^{i,i+1} =E_2^{i,i+1}-P_i.
\end{align*}
Thus applying a similar argument as for $\sum_{i=0}^{N-1}E_1^{i,i+1}$, we can show that
\begin{equation}\label{eqn:e2 bound}
\Big|\sum_{i=0}^{N-1}E_2^{i,i+1}\Big|\leq CT^{1-\frac{1}{p}}h^{\sigma}\|g\|_{W^{\sigma,p}[0,T]}.
\end{equation}
Finally, we can conclude that
\begin{align*}
    &\big|I[g]-Q_h[g]\big|\leq \Big|\sum_{i=0}^{N-1}E_1^{i,i+1}\Big|+\Big|\sum_{i=0}^{N-1}E_2^{i,i+1}\Big|\leq CT^{1-\frac{1}{p}}h^{\sigma}\|g\|_{W^{\sigma,p}[0,T]}.
\end{align*}
\end{proof}
For classical trapezoidal quadrature (CTQ), Theorem \ref{thm:general_trap} states that its order of convergence would be the same as the regularity of the integrand. The boundary case is when $g\in W^{1,p}$, then the order is $1$.
\subsection{Randomised trapezoidal rules for $g\in W^{\sigma,p}$}\label{sec:rtr}
For the randomised trapezoidal quadrature \eqref{eqn:random trap}, the proof follows the idea of randomised quadrature given by \cite{2017rk}.
\begin{theorem}\label{thm:random_trap} Define $I^n:=\int_0^{t_n}g(t)\diff{t}$ for $n\in [N]$ for $g \in W^{\sigma,p}$ with $\sigma\geq 1$ and $p\geq 2$. Then $RQ^{\tau,n}_h[g]\in L^p(\Omega;\mathbb{R}^d)$ and is an unbiased estimator of $I^n[g]$, i.e., $\mathbb{E}[RQ_h^{\tau,n}[g]]=I^n[g]$. Moreover, it holds true that 
\begin{equation}\label{eqn:main bound2}
    \big\|I[g]-RQ^{\tau,N}_h[g]\big\|_{L^p(\Omega;\mathbb{R}^d)}\leq C_p|T|^{\frac{p-2}{2p}} h^{\frac{1}{2}+\sigma}\|g\|_{W^{\sigma,p}(0,T)},
\end{equation}
where $C_p$ is a constant that depends only on $p$.
\end{theorem}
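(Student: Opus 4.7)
The plan is to express the error as a sum of independent mean--zero random variables, apply the Burkholder--Davis--Gundy (BDG) inequality to gain an extra factor of $h^{1/2}$ over the classical estimate, and recycle the per--interval analysis from Theorem~\ref{thm:general_trap}. First I would verify unbiasedness: since $\tau_i\sim U[0,1]$, the change of variables $t=t_i+\tau h$ gives $\E[g(t_i+\tau_i h)]=h^{-1}\int_{t_i}^{t_{i+1}}g(t)\diff{t}$, with the same identity for $\bar\tau_i$ by reflection, so summing yields $\E[RQ_h^{\tau,n}[g]]=I^n[g]$. The Sobolev embedding $W^{\sigma,p}(0,T)\hookrightarrow C([0,T];\R^d)$ (valid because $\sigma p\geq 2>1$) secures $L^p(\Omega;\R^d)$ integrability.

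Setting $\xi_i:=\int_{t_i}^{t_{i+1}}g(t)\diff{t}-\tfrac{h}{2}(g(t_i+\tau_i h)+g(t_i+\bar\tau_i h))$ and $S_n:=\sum_{i=0}^{n-1}\xi_i$, each $\xi_i$ depends only on $\tau_i$, the $\tau_i$ are independent, and $\E[\xi_i]=0$, so $(S_n)_{n\in[N]}$ is a discrete martingale with $[S]_N=\sum_i|\xi_i|^2$. Applying Theorem~\ref{th:BDG} together with Minkowski's inequality in $L^{p/2}(\Omega)$ (allowed because $p\geq 2$) gives
\begin{equation*}
\|I[g]-RQ_h^{\tau,N}[g]\|_{L^p(\Omega;\R^d)}=\|S_N\|_{L^p(\Omega;\R^d)}\leq C_p\Bigl(\sum_{i=0}^{N-1}\|\xi_i\|_{L^p(\Omega;\R^d)}^2\Bigr)^{1/2},
\end{equation*}
and the whole gain of $h^{1/2}$ over the classical rate will flow from the appearance of squares on the right--hand side.

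For the per--interval estimate I decompose $\xi_i$ around the midpoint $t_{i+1/2}$ using \eqref{eqn:rewrite}, obtaining the deterministic piece $E_1^{i,i+1}$ together with a random piece $E_2^i(\tau_i)=-\tfrac{h}{2}\bigl(\int_{t_{i+1/2}}^{t_i+\tau_i h}\dot g(s)\diff{s}+\int_{t_{i+1/2}}^{t_i+\bar\tau_i h}\dot g(s)\diff{s}\bigr)$. The randomised analogue of the $V_i, P_i=0$ cancellations used in Theorem~\ref{thm:general_trap} is the identity $(t_i+\tau_i h-t_{i+1/2})+(t_i+\bar\tau_i h-t_{i+1/2})=0$, which lets me replace $\dot g(s)$ by $\dot g(s)-\dot g(r)$ inside each integral for any $r$, average $r$ over $[t_i,t_{i+1}]$, and enlarge the random limits back to $[t_i,t_{i+1}]$ at the cost of absolute values. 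Since $t_i+\tau_i h,t_i+\bar\tau_i h\in[t_i,t_{i+1}]$, this yields a bound on $|\xi_i|$ that is uniform in $\omega$ and mirrors the one on $|E_1^{i,i+1}|$ from Theorem~\ref{thm:general_trap}. Applying H\"older's inequality twice and inserting the Slobodeckij weight produces
\begin{equation*}
\|\xi_i\|_{L^p(\Omega;\R^d)}\leq C h^{\sigma+1-1/p}B_i^{1/p},\qquad B_i:=\int_{t_i}^{t_{i+1}}\int_{t_i}^{t_{i+1}}\frac{|\dot g(r)-\dot g(s)|^p}{|r-s|^{1+(\sigma-1)p}}\diff{r}\diff{s}.
\end{equation*}

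Summing via the discrete H\"older inequality and using $N=T/h$ together with $\sum_i B_i\leq\|g\|_{W^{\sigma,p}}^p$ gives
\begin{equation*}
\sum_{i=0}^{N-1}\|\xi_i\|_{L^p}^2\leq C h^{2(\sigma+1)-2/p}\Bigl(\frac{T}{h}\Bigr)^{1-2/p}\Bigl(\sum_{i=0}^{N-1}B_i\Bigr)^{2/p}\leq C T^{1-2/p}h^{2\sigma+1}\|g\|_{W^{\sigma,p}}^2,
\end{equation*}
and taking the square root yields \eqref{eqn:main bound2}. The main obstacle will lie in the per--interval step: one has to engineer the averaging so that, despite the random endpoints $t_i+\tau_i h,t_i+\bar\tau_i h$, the resulting pointwise bound on $|\xi_i|$ is independent of $\omega$ and retains the full Slobodeckij weighting, since only then does the square--root savings from BDG translate into the advertised $h^{\sigma+1/2}$ rate.
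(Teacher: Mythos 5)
Your proposal is correct and follows essentially the same route as the paper: the martingale/BDG step combined with Minkowski in $L^{p/2}(\Omega)$, the midpoint decomposition into $E_1^{i,i+1}$ and a randomised $E_2$ term with the cancellation $(t_i+\tau_i h-t_{i+1/2})+(t_i+\bar\tau_i h-t_{i+1/2})=0$, and the double H\"older plus Slobodeckij-weight insertion followed by discrete H\"older over $i$. The only minor difference is that the paper treats $\sigma=1$ separately (bounding $|\dot g(r)-\dot g(s)|$ by $|\dot g(r)|+|\dot g(s)|$ and using only the $L^p$ norm of $\dot g$, since the Slobodeckij seminorm is not part of the $W^{1,p}$ norm), so your uniform weight-insertion argument should be restricted to $\sigma>1$ with that boundary case handled as in your $\sigma=1$ remark for Theorem~\ref{thm:general_trap}.
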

\begin{proof}
First due to $g\in W^{\sigma,p}$ we have $\|g\|_{L^p([0,T];\mathbb{R}^d)}<\infty$. Recall that $\tau_i\in \mathcal{U}(0,1)$ for each $i\in [N-1]\cup\{0\}$. Then it follows that
\begin{equation*}
    \frac{h}{2}\big(\|g(t_i+\tau_i h)\|^p_{L^p(\Omega;\mathbb{R}^d)}+\|g(t_i+\bar{\tau}_i h)\|^p_{L^p(\Omega;\mathbb{R}^d)}\big)=\int_{t_i}^{t_{i+1}}|g(t)|^p \diff{t}<\infty.
\end{equation*}
Hence $RQ_h^{\tau,n}[g]\in L^p(\Omega)$ for $n\in [N]$. To show $RQ_h^{\tau,n}[g]$ is unbiased, we need to examine each term in RHS of Eqn.\eqref{eqn:random trap} through spelling out the expectation and changing variable, i.e.,
\begin{equation*}
  \frac{h}{2}\mathbb{E}[g(t_i+\tau_ih)]=  \frac{h}{2}\int_0^1g(t_i+r h)\diff{r}=\frac{1}{2}\int_{t_i}^{t_{i+1}}g(t)\diff{t},
\end{equation*}
and 
\begin{equation*}
  \frac{h}{2}\mathbb{E}[g(t_i+\bar{\tau}_ih)]=\frac{h}{2}\int_0^1g(t_i+(1-r) h)\diff{r}=\frac{1}{2}\int_{t_i}^{t_{i+1}}g(t)\diff{t}.
\end{equation*}
Summing these terms up gives that $RQ^{\tau,n}_h[g]$ is unbiased for $I^n[g]$. Furthermore, if define the error term like
\begin{equation}\label{eqn:error term}
    E^n:=I^n[g]-RQ^{\tau,n}_h[g]=\frac{1}{2}\sum_{i=0}^{n-1}\int_{t_i}^{t_{i+1}}\big(2g(t)-g(t_i+\tau_i h)-g(t_i+\bar{\tau}_i h)\big)\diff{t},
\end{equation}
then each summand is a mean-zero random variable, i.e.,
\begin{equation*}
    \mathbb{E}\big[\int_{t_i}^{t_{i+1}}\big(2g(t)-g(t_i+\tau_i h)-g(t_i+\bar{\tau}_i h)\big)\diff{t}\big]=0.
\end{equation*}
Note that the summands are mutually independent due to the independence of $\{\tau_i\}_{i=0}^{N-1}$. In addition, it is easy to show $E^n\in L^p(\Omega;\mathbb{R}^d)$. Therefore, $E^n$ is a $L^p$-martingale. Then applying the discrete version of the Burkholder$-$Davis$-$Gundy inequality leads to
\begin{align}\label{eqn:bdg}
\begin{split}
  &  \big\|\max_{n}|E^n|\big\|_{L^p(\Omega)}\leq C_p\big\|\mathbf{[}E^n\mathbf{]}_{N}^{\frac{1}{2}}\big\|_{L^p(\Omega)} \\
  &=\frac{C_p}{2}\Big\|\Big(\sum_{i=0}^{N-1}\Big|\int_{t_i}^{t_{i+1}}\big(2g(t)-g(t_i+\tau_i h)-g(t_i+\bar{\tau}_i h)\big)\diff{t}\Big|^2\Big)^{\frac{1}{2}}\Big\|_{L^p(\Omega)} \\
    &\leq C_p\Big\|\Big(\sum_{i=0}^{N-1}\Big|\int_{t_i}^{t_{i+1}}\big(g(t)-g(t_i+\tau_i h))\big)\diff{t}\Big|^2\Big)^{\frac{1}{2}}\Big\|_{L^p(\Omega)} \\
    &+ C_p\Big\|\Big(\sum_{i=0}^{N-1}\Big|\int_{t_i}^{t_{i+1}}\big(g(t)-g(t_i+\bar{\tau}_i h)\big)\diff{t}\Big|^2\Big)^{\frac{1}{2}}\Big\|_{L^p(\Omega)},
\end{split}
\end{align}
where in the second line we substitute the quadratic variation $[E^n]_N$. Due to symmetric property, it is easy to see we only need to handle the first term on the RHS of Eqn.\eqref{eqn:bdg}. Note that
\begin{align}\label{eqn:bdgafter}
\begin{split}
   & C_p\Big\|\Big(\sum_{i=0}^{N-1}\Big|\int_{t_i}^{t_{i+1}}\big(g(t)-g(t_i+\tau_i h))\big)\diff{t}\Big|^2\Big)^{\frac{1}{2}}\Big\|_{L^p(\Omega)} \\
   &=C_p\Big\|\sum_{i=0}^{N-1}\Big|\int_{t_i}^{t_{i+1}}\big(g(t)-g(t_i+\tau_i h))\big)\diff{t}\Big|^2\Big\|_{L^{\frac{p}{2}}(\Omega)}^{\frac{1}{2}}\\
   &\leq C_p\Big(\sum_{i=0}^{N-1}\Big\|\int_{t_i}^{t_{i+1}}\big(g(t)-g(t_i+\tau_i h))\big)\diff{t}\Big\|^2_{L^{p}(\Omega;\mathbb{R}^d)}\Big)^{\frac{1}{2}}.
\end{split}
\end{align}
Then we have that 
\begin{align*}
   & C_p\Big(\sum_{i=0}^{N-1}\Big\|\int_{t_i}^{t_{i+1}}\big(g(t)-g(t_i+\tau_i h))\big)\diff{t}\Big\|^2_{L^{p}(\Omega;\mathbb{R}^d)}\Big)^{\frac{1}{2}}\\
   &= C_p\Big(\sum_{i=0}^{N-1}\Big\|\int_{t_i}^{t_{i+1}}\int_{t_{i}+\tau_ih}^t \dot{g}(s)\diff{s}\diff{t}\Big\|^2_{L^{p}(\Omega;\mathbb{R}^d)}\Big)^{\frac{1}{2}}\\
   &\leq C_p\Big(\sum_{i=0}^{N-1}\Big\|\int_{t_i}^{t_{i+1}}\int_{t_{i}+\tau_ih}^t |\dot{g}(s)|\diff{s}\diff{t}\Big\|^2_{L^{p}(\Omega)}\Big)^{\frac{1}{2}}\\
    &\leq C_ph\Big(\sum_{i=0}^{N-1}\Big|\int_{t_{i}}^{t_{i+1}} |\dot{g}(s)|\diff{s}\Big|^2\Big)^{\frac{1}{2}}\leq C_ph\Big(\sum_{i=0}^{N-1}h^{\frac{2}{q}}\Big|\int_{t_{i}}^{t_{i+1}} |\dot{g}(s)|^p\diff{s}\Big|^{\frac{2}{p}}\Big)^{\frac{1}{2}}.
    \end{align*}
When $p=2$, the term on the right hand side above can be directly bounded by
\begin{equation}\label{eqn:bound_p2}
    C_ph\Big(\sum_{i=0}^{N-1}h^{\frac{2}{q}}\Big|\int_{t_{i}}^{t_{i+1}} |\dot{g}(s)|^p\diff{s}\Big|^{\frac{2}{p}}\Big)^{\frac{1}{2}}\leq C_ph^{\frac{3}{2}}\|g\|_{W^{1,p}}.
\end{equation}
where $\frac{1}{q}+\frac{1}{p}=1$. For $p>2$, we may apply discrete H\"older inequality and get
\begin{align}\label{eqn:bound_plarger2}
\begin{split}
        &C_ph\Big(\sum_{i=0}^{N-1}h^{\frac{2}{q}}\Big|\int_{t_{i}}^{t_{i+1}} |\dot{g}(s)|^p\diff{s}\Big|^{\frac{2}{p}}\Big)^{\frac{1}{2}}\\
     &\leq C_ph \Big(\sum_{i=0}^{N-1}h^{\frac{2}{q}\frac{p}{p-2}}\Big)^{\frac{p-2}{2p}}\Big(\sum_{i=0}^{N-1}\int_{t_{i}}^{t_{i+1}} |\dot{g}(s)|^p\diff{s}\Big)^{\frac{1}{p}}\\
     &\leq C_ph^{1+\big(\frac{2p}{q(p-2)}-1\big)\frac{p-2}{2p}}|T|^{\frac{p-2}{2p}}\|g\|_{W^{1,p}}=C_ph^{\frac{3}{2}}|T|^{\frac{p-2}{2p}}\|g\|_{W^{1,p}}. 
\end{split}
\end{align}
Now we have shown Bound \eqref{eqn:main bound2} when $\sigma=1$. For Bound \eqref{eqn:main bound2} under $\sigma>1$, we first note that Eqn.\eqref{eqn:rewrite} remains true if replacing $t_i$ by $t_i+\tau_i h$ and $t_{i+1}$ by $t_i+\bar{\tau}_i h$, i.e.,
\begin{equation}\label{eqn:rewrite1}
    g(t_i+\tau_i h)+g(t_i+\bar{\tau}_i h)=2g(t_{i+\frac{1}{2}})+\int_{t_{i+\frac{1}{2}}}^{t_i+\tau_i h}\dot{g}(s)\diff{s}+\int_{t_{i+\frac{1}{2}}}^{t_i+\bar{\tau}_i h}\dot{g}(s)\diff{s}.
\end{equation}
 Thus the second line of Eqn.\eqref{eqn:bdg} can be further splitted as the follows:
\begin{align*}
   & \frac{C_p}{2}\Big\|\Big(\sum_{i=0}^{N-1}\Big|\int_{t_i}^{t_{i+1}}\big(2g(t)-g(t_i+\tau_i h)-g(t_i+\bar{\tau}_i h)\big)\diff{t}\Big|^2\Big)^{\frac{1}{2}}\Big\|_{L^p(\Omega)}\\
   &\leq C_p\Big\|\Big(\sum_{i=0}^{N-1}\Big|\int_{t_i}^{t_{i+1}}\big(g(t)-g(t_{i+\frac{1}{2}}) \big)\diff{t}\Big|^2\Big)^{\frac{1}{2}}\Big\|_{L^p(\Omega)}\\
   &+ C_p\Big\|\Big(\sum_{i=0}^{N-1}\Big|\int_{t_i}^{t_{i+1}}\int_{t_{i+\frac{1}{2}}}^{t_i+\tau_i h}\dot{g}(s)\diff{s}\diff{t}+\int_{t_i}^{t_{i+1}}\int_{t_{i+\frac{1}{2}}}^{t_i+\bar{\tau}_ih}\dot{g}(s)\diff{s}\diff{t}\Big|^2\Big)^{\frac{1}{2}}\Big\|_{L^p(\Omega)}.
\end{align*}
Similar as in the proof of Theorem \ref{thm:general_trap}, we introduce $E_1^{i,i+1}$ defined in Eqn. \eqref{eqn:e1} and 
\begin{equation}\label{eqn:e2_random}
    E^{i,i+1}_2(\tau):=\frac{1}{2}\int_{t_i}^{t_{i+1}}\big(\int_{t_{i+\frac{1}{2}}}^{t_i+\tau h}\dot{g}(s)\diff{s}+\int_{t_{i+\frac{1}{2}}}^{t_{i}+\bar{\tau}h}\dot{g}(s)\diff{s}\big)\diff{t}.
\end{equation}
As in the proof of Theorem \ref{thm:general_trap}, $E^{i,i+1}_1$ can be handled through the equivalent form Eqn.\eqref{eqn:e1 alterntive} and $E_2^{i,i+1}(\tau)$ can be treated in a similar way as Eqn.\eqref{eqn:e2 alternative} by replacing $t_i$ by $t_i+\tau_i h$ and $t_{i+1}$ by $t_i+\bar{\tau}_i h$ in the inner integral of Eqn.\eqref{eqn:e2 alternative}, i.e.,
\begin{equation}\label{eqn:e2 alternative1}
    E_2^{i,i+1}=\frac{1}{2h}\int_{t_i}^{t_{i+1}}\int_{t_i}^{t_{i+1}}\Big(\int_{t_{i+\frac{1}{2}}}^{t_i+\tau_i h}(\dot{g}(s)-\dot{g}(r))\diff{r}+\int_{t_{i+\frac{1}{2}}}^{t_i+\bar{\tau}_i h}(\dot{g}(s)-\dot{g}(r))\diff{r}\Big)\diff{s}\diff{t}.
\end{equation}
Thus
\begin{align*}
   & \frac{C_p}{2}\Big\|\Big(\sum_{i=0}^{N-1}\Big|\int_{t_i}^{t_{i+1}}\big(2g(t)-g(t_i+\tau_i h)-g(t_i+\bar{\tau}_i h)\big)\diff{t}\Big|^2\Big)^{\frac{1}{2}}\Big\|_{L^p(\Omega)}\\
   &\leq C_p  \Big\|\Big(\sum_i^{N-1}|E_1^{i,i+1}|^2\Big)^{\frac{1}{2}}\Big\|_{L^p(\Omega)}+C_p  \Big\|\Big(\sum_i^{N-1}|E_2^{i,i+1}|^2\Big)^{\frac{1}{2}}\Big\|_{L^p(\Omega)}\\
   &=\frac{C_p}{h}\Big\|\Big(\sum_{i=0}^{N-1}\Big|\int_{t_i}^{t_{i+1}}\int_{t_i}^{t_{i+1}}\int_{t_{i+\frac{1}{2}}}^{t}(\dot{g}(r)-\dot{g}(s))\diff{r}\diff{s}\diff{t}\Big|^2\Big)^{\frac{1}{2}}\Big\|_{L^p(\Omega)}\\
    &+\frac{C_p}{2h}\Big\|\Big(\sum_{i=0}^{N-1}\Big|\int_{t_i}^{t_{i+1}}\int_{t_i}^{t_{i+1}}\Big(\int_{t_{i+\frac{1}{2}}}^{t_i+\tau_i h}(\dot{g}(s)-\dot{g}(r))\diff{r}\\
    &+\int_{t_{i+\frac{1}{2}}}^{t_i+\bar{\tau}_i h}(\dot{g}(s)-\dot{g}(r))\diff{r}\Big)\diff{s}\diff{t}\Big|^2\Big)^{\frac{1}{2}}\Big\|_{L^p(\Omega)},
\end{align*}
where the first term on the right hand side from Eqn.\eqref{eqn:e1 alterntive} and the second term is due to Eqn. \eqref{eqn:e2 alternative1}. Let us now deal with the first term, the second term can be handled in the same way. Following a similar argument in \eqref{eqn:bdgafter}, we have that 
\begin{align*}
    &\frac{C_p}{h}\Big\|\Big(\sum_{i=0}^{N-1}\Big|\int_{t_i}^{t_{i+1}}\int_{t_i}^{t_{i+1}}\int_{t_{i+\frac{1}{2}}}^{t}(\dot{g}(r)-\dot{g}(s))\diff{r}\diff{s}\diff{t}\Big|^2\Big)^{\frac{1}{2}}\Big\|_{L^p(\Omega)}\\
    &\leq \frac{C_p}{h}\Big(\sum_{i=0}^{N-1}\Big\|\int_{t_i}^{t_{i+1}}\int_{t_i}^{t_{i+1}}\int_{t_{i+\frac{1}{2}}}^{t}(\dot{g}(r)-\dot{g}(s))\diff{r}\diff{s}\diff{t}\Big\|^2_{L^{p}(\Omega;\mathbb{R}^d)}\Big)^{\frac{1}{2}}\\
    &\leq C_p\Big(\sum_{i=0}^{N-1}\Big|\int_{t_i}^{t_{i+1}}\int_{t_{i+\frac{1}{2}}}^{t}|\dot{g}(r)-\dot{g}(s)|\diff{r}\diff{s}\Big|^2\Big)^{\frac{1}{2}}\\
    &\leq C_p\Big(\sum_{i=0}^{N-1}h^{\frac{2}{q}}\Big(\int_{t_i}^{t_{i+1}}\big(\int_{t_{i+\frac{1}{2}}}^{t}|\dot{g}(r)-\dot{g}(s)|\diff{r}\big)^p\diff{s}\Big)^{\frac{2}{p}}\Big)^{\frac{1}{2}}\\
    &\leq C_p\Big(\sum_{i=0}^{N-1}h^{\frac{4}{q}}\Big(\int_{t_i}^{t_{i+1}}\int_{t_{i+\frac{1}{2}}}^{t}|\dot{g}(r)-\dot{g}(s)|^p\diff{r}\diff{s}\Big)^{\frac{2}{p}}\Big)^{\frac{1}{2}}\\
        &\leq C_p\Big(\sum_{i=0}^{N-1}h^{\frac{4}{q}+\frac{2}{p}+2(\sigma-1)}\Big(\int_{t_i}^{t_{i+1}}\int_{t_{i+\frac{1}{2}}}^{t}\frac{|\dot{g}(r)-\dot{g}(s)|^p}{|r-s|^{1+(\sigma-1)p}}\diff{r}\diff{s}\Big)^{\frac{2}{p}}\Big)^{\frac{1}{2}}\\
                &= C_ph^{\sigma}\Big(\sum_{i=0}^{N-1}h^{\frac{2}{q}}\Big(\int_{t_i}^{t_{i+1}}\int_{t_{i+\frac{1}{2}}}^{t}\frac{|\dot{g}(r)-\dot{g}(s)|^p}{|r-s|^{1+(\sigma-1)p}}\diff{r}\diff{s}\Big)^{\frac{2}{p}}\Big)^{\frac{1}{2}},
\end{align*}
where we apply H\"older's inequality in Line 4 and 5. Similarly as in \eqref{eqn:bound_p2}, for $p=2$ we have that
\begin{equation*}
    C_ph^\sigma\Big(\sum_{i=0}^{N-1}h^{\frac{2}{q}}\Big(\int_{t_i}^{t_{i+1}}\int_{t_{i+\frac{1}{2}}}^{t}\frac{|\dot{g}(r)-\dot{g}(s)|^p}{|r-s|^{1+(\sigma-1)p}}\diff{r}\diff{s}\Big)^{\frac{2}{p}}\Big)^{\frac{1}{2}}\leq C_p h^{\sigma+\frac{1}{2}}\|g\|_{W^{\sigma,p}}.
\end{equation*}
Applying discrete H\"older inequality for $p>2$ as in \eqref{eqn:bound_plarger2}, we have that
\begin{align*}
    & C_p h^\sigma \Big(\sum_{i=0}^{N-1}h^{\frac{2}{q}}\Big(\int_{t_i}^{t_{i+1}}\int_{t_{i+\frac{1}{2}}}^{t}\frac{|\dot{g}(r)-\dot{g}(s)|^p}{|r-s|^{1+(\sigma-1)p}}\diff{r}\diff{s}\Big)^{\frac{2}{p}}\Big)^{\frac{1}{2}}\\
     &\leq C_p h^{\sigma+\frac{1}{2}}T^{\frac{p-2}{2p}}\|g\|_{W^{\sigma,p}}.   
\end{align*}
Altogether we have achieved Bound \eqref{eqn:main bound2}.
\end{proof}
Compared to Theorem \ref{thm:general_trap}, for fixed integrand, the randomised quadrature rule (RTQ) improves the order of convergence by $\frac{1}{2}$ through incorporating randomness. One may also be interested in investigating the almost sure convergence of RTQ. Indeed, the argument from Theorem 3.2 \cite{2017rk} can be directly adapted here:

\begin{theorem}[Almost sure convergence]
  \label{th3:Rie_pathwise}
Assume that conditions from Theorem \ref{thm:random_trap} are satisfied. 
  Let $(h_m)_{m \in \N} \subset (0,1)$ be an arbitrary sequence of step sizes
  with $\sum_{m = 1}^\infty h_m < \infty$. Then, there exist a nonnegative
  random variable $m_0 \colon \Omega \to \N \cup \{0\}$ and a measurable set
  $A \in \F$ with $\P(A) = 1$ such that for all $\omega \in A$ and $m \ge
  m_0(\omega)$, then for every $\epsilon \in (0,\frac{1}{2})$
  there exist a nonnegative random variable $m_0^\epsilon \colon \Omega \to
  \N_0$ and a measurable set $A_\epsilon \in \F$ with $\P(A_\epsilon) = 1$ such
  that 
  such that for all $\omega \in A$ and $m \ge
  m_0(\omega)$ we have
  \begin{align}
    \label{eq4:errRie4}
    \max_{n \in \{0,1,\ldots,N_{h_m}\}} \Big| I^n[g] -
    RQ^{\tau,n}_{h_m}[g](\omega) \Big| \le h_m^{\frac{1}{2} + \gamma - \epsilon},
  \end{align}
where $N_{h_m}:=\lfloor \frac{T}{h_m}\rfloor$, i.e., the integer part of $\frac{T}{h_m}$.
\end{theorem}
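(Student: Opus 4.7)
The plan is to deduce the pathwise bound from the $L^p$ bound obtained in Theorem \ref{thm:random_trap} by means of Markov's inequality and the Borel--Cantelli lemma, exactly along the lines of Theorem 3.2 in \cite{2017rk}.

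First I would remark that the BDG-based argument in Theorem \ref{thm:random_trap} actually controls $\max_{n\in[N]}|E^n|$ in $L^p(\Omega)$, not merely $|E^N|$; that is,
\begin{equation*}
\Big\|\max_{n\in\{0,1,\ldots,N_{h_m}\}}\big|I^n[g]-RQ^{\tau,n}_{h_m}[g]\big|\Big\|_{L^p(\Omega)}\leq C_p T^{\frac{p-2}{2p}}\,h_m^{\sigma+\frac{1}{2}}\,\|g\|_{W^{\sigma,p}(0,T)}.
\end{equation*}
This inherits directly from the first line of \eqref{eqn:bdg}, which is stated for $\|\max_n|E^n|\|_{L^p}$.

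Next, for fixed $\epsilon\in(0,\tfrac{1}{2})$, I would apply Markov's inequality with the power $p$. Setting $X_m:=\max_{n}\bigl|I^n[g]-RQ^{\tau,n}_{h_m}[g]\bigr|$, this yields
\begin{equation*}
\P\bigl(X_m>h_m^{\frac{1}{2}+\sigma-\epsilon}\bigr)\leq \frac{\E[X_m^p]}{h_m^{p(\frac{1}{2}+\sigma-\epsilon)}}\leq K\,h_m^{p\epsilon},
\end{equation*}
for a constant $K=K(p,T,\|g\|_{W^{\sigma,p}})$ that is independent of $m$. Now I would pick $p$ large enough that $p\epsilon\geq 1$; since $h_m\in(0,1)$, this gives $h_m^{p\epsilon}\leq h_m$, and the assumption $\sum_{m\geq 1}h_m<\infty$ then forces $\sum_{m\geq 1}\P(X_m>h_m^{1/2+\sigma-\epsilon})<\infty$.

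By the first Borel--Cantelli lemma, the set $A_\epsilon:=\{\omega:\,X_m(\omega)\leq h_m^{1/2+\sigma-\epsilon}\text{ for all but finitely many }m\}$ satisfies $\P(A_\epsilon)=1$. Define $m_0^\epsilon(\omega)$ on $A_\epsilon$ as the smallest integer beyond which the inequality holds for every subsequent $m$, and set $m_0^\epsilon(\omega)=0$ off $A_\epsilon$. This is a measurable $\N_0$-valued random variable, and the claimed pathwise estimate \eqref{eq4:errRie4} follows by construction. The set $A$ and $m_0$ in the statement can be taken to be $A_\epsilon$ and $m_0^\epsilon$ for a single chosen $\epsilon$; alternatively one may take the countable intersection $A=\bigcap_{k\in\N}A_{1/k}$ to obtain a single full-measure set valid for all $\epsilon>0$ simultaneously.

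The only real subtlety is the dependence of the constant $K$ on $p$: the BDG constant $C_p$ and the ambient Sobolev norm $\|g\|_{W^{\sigma,p}}$ both depend on $p$, but since $\epsilon$ (hence the needed $p=\lceil 1/\epsilon\rceil$) is fixed before we start summing, $K$ remains a harmless finite constant. Given the integrand is assumed to lie in $W^{\sigma,p}$ for the relevant $p$, no additional regularity hypothesis is introduced. Hence the argument is almost mechanical once the maximal $L^p$ estimate of Theorem \ref{thm:random_trap} is in hand, and I do not anticipate any essential obstacle beyond this bookkeeping.
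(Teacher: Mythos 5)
Your proposal is exactly the argument the paper has in mind: the paper gives no written proof of Theorem \ref{th3:Rie_pathwise}, deferring entirely to Theorem 3.2 of \cite{2017rk}, and that argument is precisely the maximal $L^p$ estimate already contained in the first line of \eqref{eqn:bdg} combined with Markov's inequality and the first Borel--Cantelli lemma, as you describe. The one point you should make explicit rather than wave at is that choosing $p\geq 1/\epsilon$ requires $g\in W^{\sigma,p}$ for that (possibly large) $p$ together with the $p$-dependence of $C_p$ and $\|g\|_{W^{\sigma,p}}$; for a single fixed $p$ the argument only yields \eqref{eq4:errRie4} for $\epsilon\geq 1/p$ unless the summability hypothesis is strengthened to $\sum_m h_m^{p\epsilon}<\infty$ --- a caveat that equally afflicts the theorem as stated (whose statement also duplicates its quantifiers and writes $\gamma$ where $\sigma$ is meant).
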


Theorem \ref{th3:Rie_pathwise} ensures that RTQ can achieve a slightly better order of pathwise convergence in almost sure sense compared to CTQ when stepsize is adequately small.
\subsection{Numerical experiments} 
In this section we assess the proposed scheme via different experiments. For simplicity, we fix T=1.
\subsubsection{Example 1}
Consider the function:
\begin{equation}\label{eqn:1d_eg}
    g_\gamma(t):=t^\gamma,
\end{equation}
where $\gamma\in\{\frac{5}{4},\frac{3}{2},\frac{7}{4}\}$, $ g_\gamma\in W^{\sigma,2}(0,T)$, for all $\epsilon\in (1,\frac{1}{2}+\gamma)$ (Sobolev's inequality in \cite{2003sobolev}). The curves of $g_\gamma$ with different values in $\gamma$ can be found in Figure \ref{fig:g_value}.
  \begin{figure}[h!]
  \centering
      \includegraphics[width=0.5\textwidth]{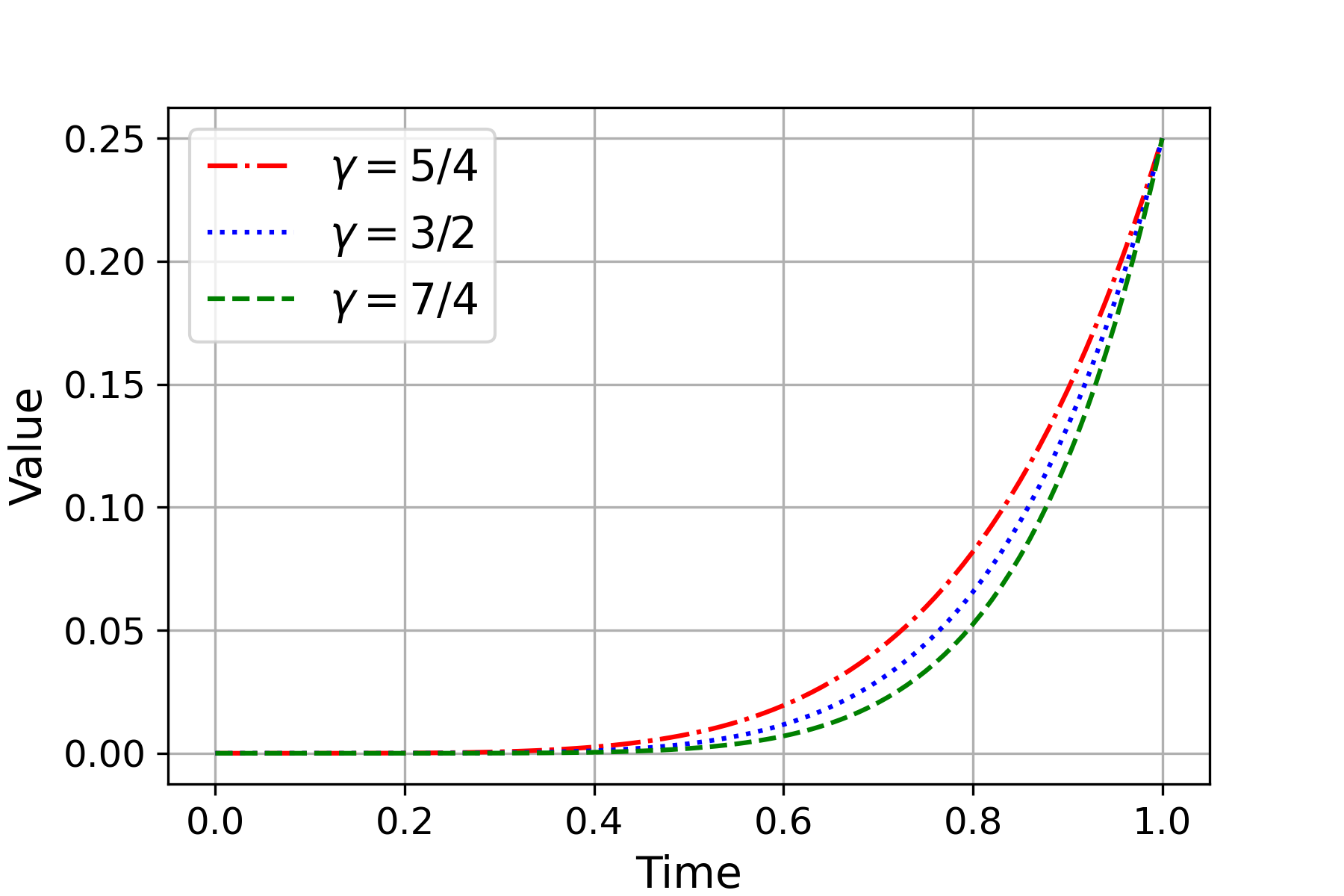}   
  \caption{Function values for $g_\gamma$ under different choices for $\gamma$.  \label{fig:g_value}}
      \end{figure}
The true solution can be easily obtained as $\frac{1}{\gamma+1}$. The
numerical approximations were calculated for both kinds of trapezoidal quadrature with step sizes $h \in \{2^{-i}: i= 5,\ldots, 10\}$ and then compared to the true solution for errors. For RTQ, we computed errors in $L^2$ norm via Monte Carlo method and also computed pathwise error, i.e.,error from one realisation. 

The results of our simulations are shown in Figure \ref{fig:g1_gamma_error_v2} and Table \ref{tab:TQ_order_1d_2v}. Across all different values of $\gamma$, RTQ gave the higher order of convergence compared to CTQ. When $\gamma$ increases from $\frac{5}{4}$ to $\frac{7}{4}$, the order of convergence for RTQ increases eventually to a number very close to $2.5$. Note that the order of convergence for CTQ are not beyond $2$ for all $\gamma$ values. All the performances are superior to theoretical order of convergences shown in Theorem \ref{thm:general_trap} and Theorem \ref{thm:random_trap}. We also examined the computational efficiency of both methods (lower right in Figure \ref{fig:g1_gamma_error_v2}). Though incorporating randomness increases computational expense, RTQ quickly offsets its cost with its higher accuracy.
  \begin{figure}[h!]

 \includegraphics[width=0.49\textwidth]{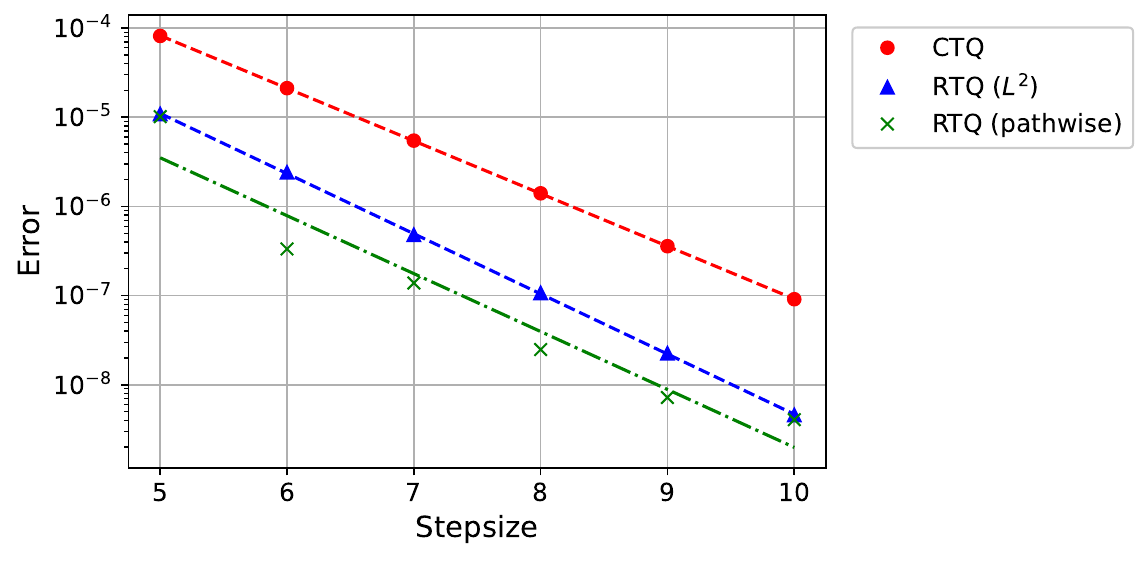}       \includegraphics[width=0.49\textwidth]{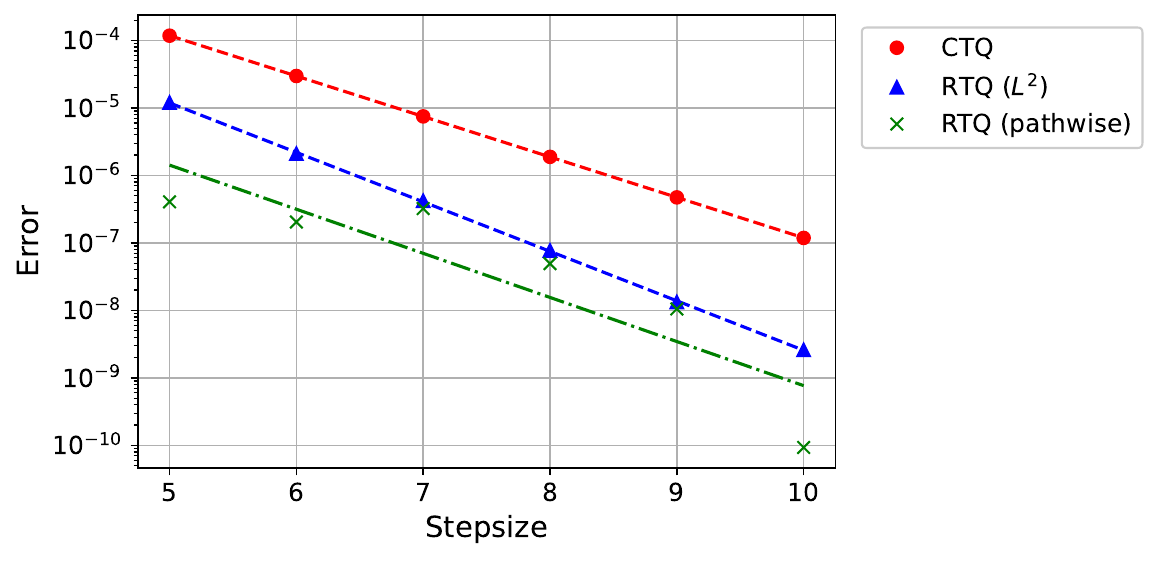} \\   \includegraphics[width=0.49\textwidth]{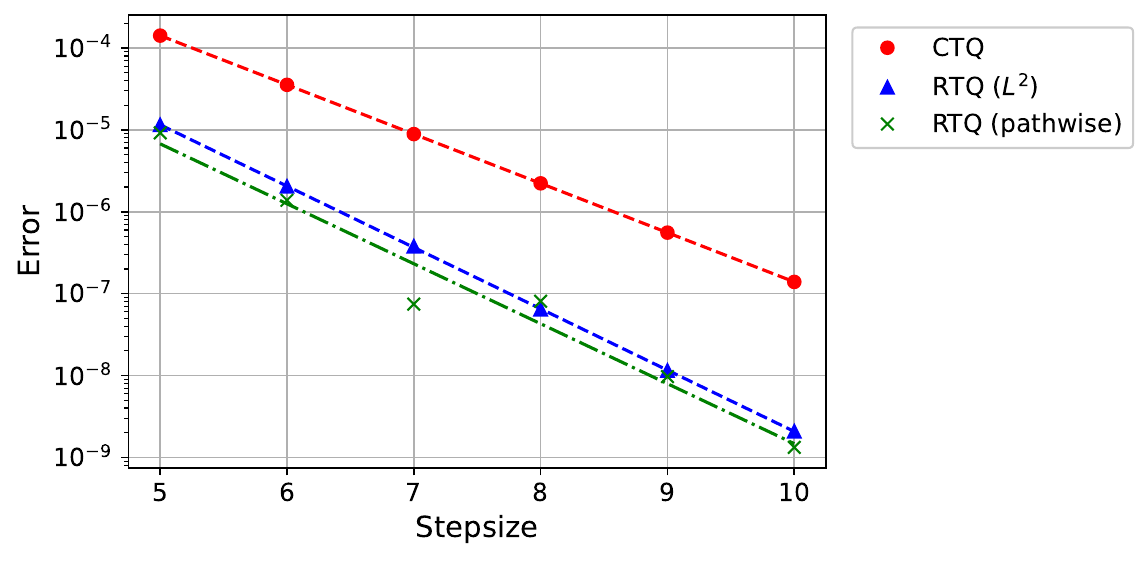}
 \includegraphics[width=0.36\textwidth]{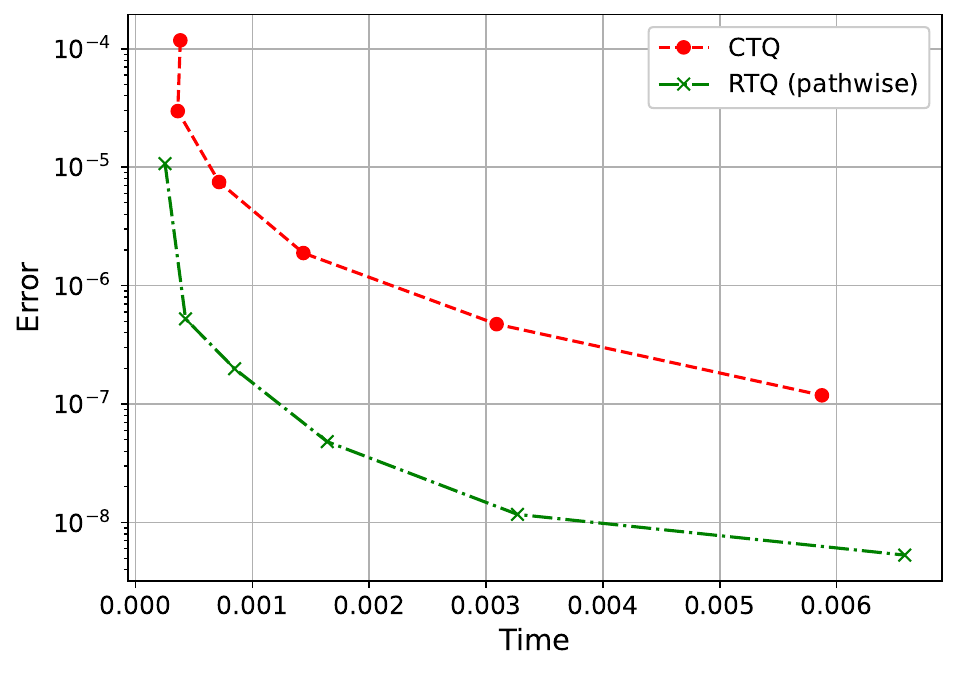}
  \caption{Error plots for approximating $I[g_\gamma]$ via variants of trapezoidal rule under different choices for $\gamma$ (upper left: $\gamma=\frac{5}{4}$; upper right: $\gamma=\frac{3}{2}$; lower left: $\gamma=\frac{7}{4}$) and time cost plot for $\gamma=\frac{3}{2}$ (lower right).  \label{fig:g1_gamma_error_v2}}
      \end{figure}

\begin{table}[ht] 
\centering
\caption{Order of convergences for simulating $I[g_\gamma]$.}
\label{tab:TQ_order_1d_2v}
\vspace{0.2cm}
\begin{tabular}{c|ccc}
\hline
\vspace{0.1cm}
$\gamma$ &  CTQ  &  RTQ ($L^2$)& RTQ (pathwise)\\
\hline
\vspace{0.1cm}
$\frac{5}{4}$ &1.96 &  2.24 &  2.13\\
\vspace{0.1cm}
$\frac{3}{2}$ & 1.99 & 2.44 & 2.17 \\
$\frac{7}{4}$ &1.99&  2.50 &  2.43\\
\hline
\end{tabular}
\end{table}

\subsubsection{Example 2}
Consider the function:
\begin{equation}\label{eqn:gB}
    g_B(t):=\int_0^t B(s) \diff{s},\ \mbox{for }t\in[0,T],
\end{equation}
where $B(s)$ is a realisation of standard Brownian motion (BM) (c.f. Section 3.1 in \cite{2013mc}). It is well known that $B\in C^{\frac{1}{2}-\epsilon}$ for arbitrary small $\epsilon>0$, therefore $g_B\in W^{\frac{3}{2}-\epsilon,p}$ for $p>1$.  Figure \ref{fig:gW_values} illustrates how one BM path looks like and the curve of its $g_B$.
  \begin{figure}[h!]
  \centering
        \includegraphics[width=0.48\textwidth]{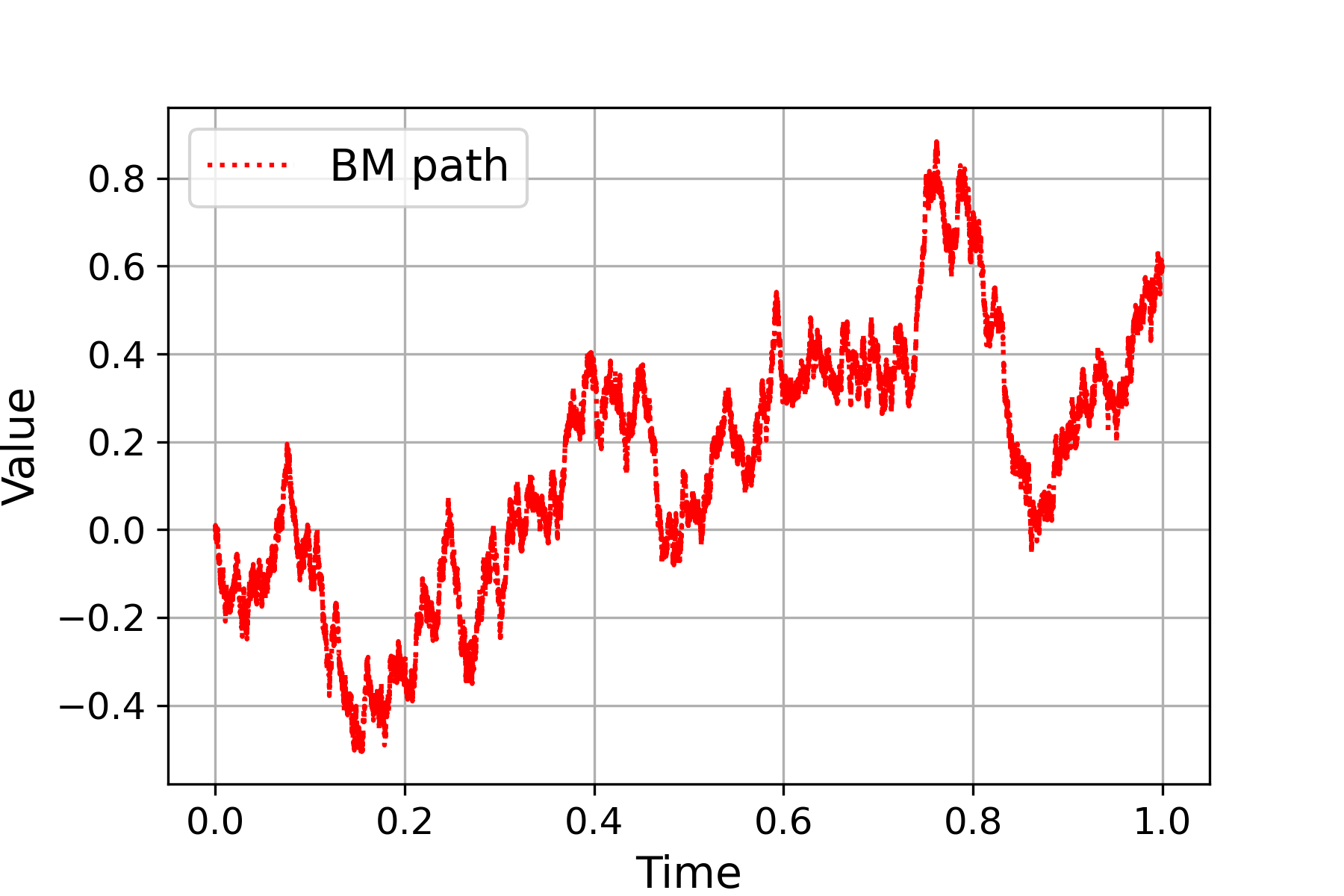}
      \includegraphics[width=0.48\textwidth]{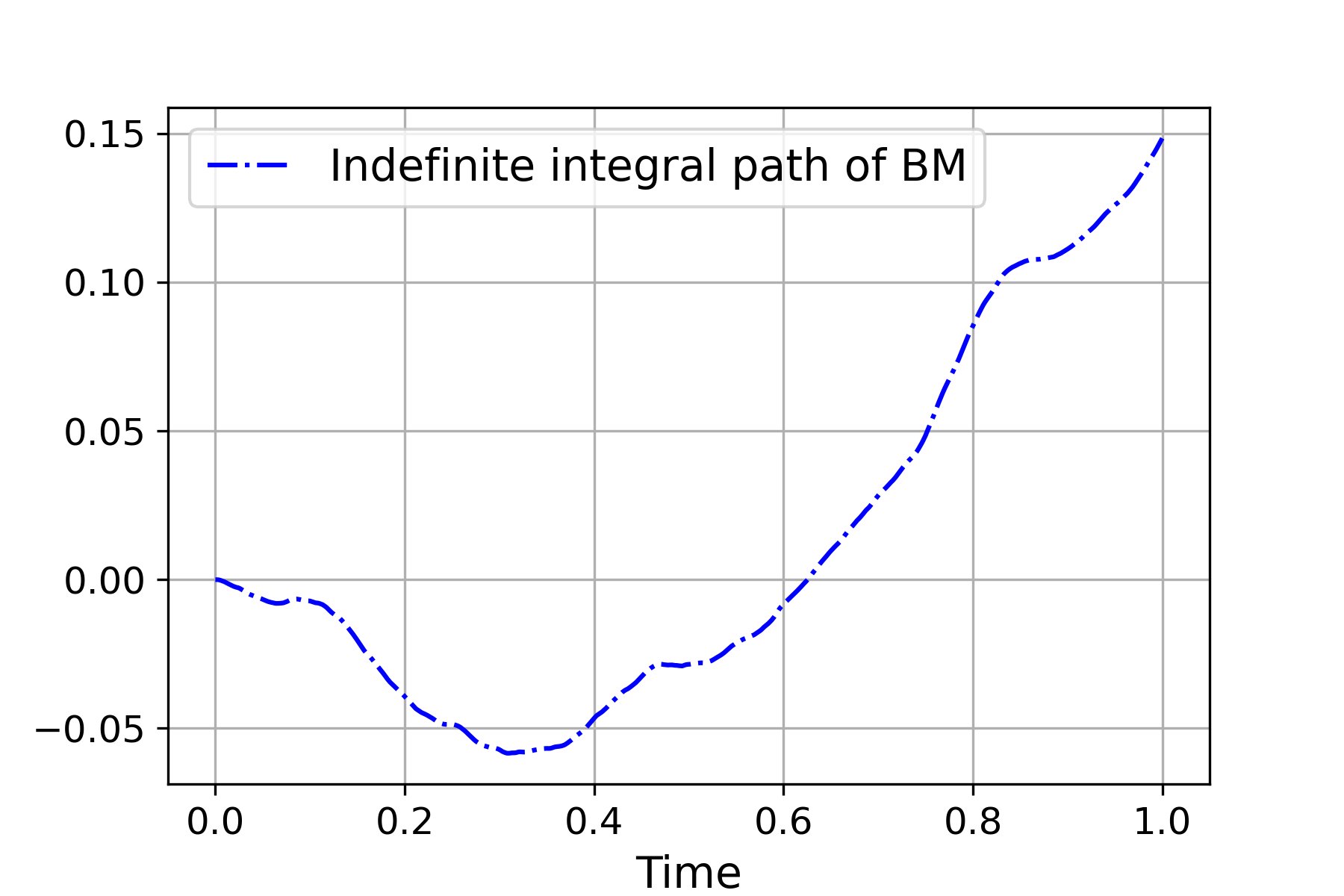}   
  \caption{One realisation of standard Brownian motion and function values for the corresponding $g_B$.  \label{fig:gW_values}}
      \end{figure}
We are interested in approximating $I[g_B]$.

Due to the nature of BM, it is not easy to obtain the exact value of $g_B$. To approximating terms $g_B(t_n)$, we simply apply Euler method, i.e.,
\begin{align*}
    g_B(t_n)=\int_0^{t_n}B(s)\diff{s}=\sum_{i=0}^{n-1}\int_{t_i}^{t_{i+1}}B(s)\diff{s}\approx h\sum_{i=0}^{n-1}B(t_i).
\end{align*}

For CTQ, for a fixed stepsize $h\in [0,1],$ we have that
\begin{align*}
 & Q_h[g_B]= \frac{h}{2}\sum_{n=0}^{N-1}(g_B(t_n)+g_B(t_{n+1}))= h\sum_{n=0}^{N-1}g_B(t_n)+\frac{h}{2}\sum_{n=0}^{N-1} (g_B(t_{n+1})-g_B(t_{n})) \\
 &=h\sum_{n=0}^{N-1}g_B(t_n)+\frac{h}{2}g_B(t_N)=h\sum_{n=1}^{N}g_B(t_n)-\frac{h}{2}g_B(t_N)\approx h^2\sum_{n=0}^{N-1}\sum_{i=0}^{n}B(t_i)-\frac{h^2}{2}\sum_{i=0}^{N-1}B(t_i).
\end{align*}
For RTQ, define the corresponding i.i.d. uniform distributed sequence is $\{\tau^h_j\}_{j\in \N}$, we have a similar expression:
\begin{align}\label{eqn:RTQ_bm}
\begin{split}
 & RQ_h^{\tau,N}[g_B]= \frac{h}{2}\sum_{n=0}^{N-1}(g_B(t_n+\tau^h_{n}h)+g_B(t_{n}+\bar{\tau}^h_n h))\\
 &=h\sum_{n=0}^{N-1}g_B(t_n)+\frac{h}{2}\sum_{n=0}^{N-1}\big(\int_{t_n}^{t_{n}+\tau^h_n h}B(s)\diff{s}+\int_{t_n}^{t_{n}+\bar{\tau}^h_n h}B(s)\diff{s}\big)\\
 &\approx h\sum_{n=0}^{N-1}g_B(t_n)+\frac{h}{2}\sum_{n=0}^{N-1}\Big(\frac{\tau^h_n h}{2} \big(B(t_n)+B(t_n+\tau^h_nh)\big)+\frac{\bar{\tau}^h_nh}{2}  \big(B(t_n)+B(t_n+\bar{\tau}^h_nh)\big)\Big)\\
 &=h\sum_{n=0}^{N-1}g_B(t_n)+\frac{h^2}{4}\sum_{n=0}^{N-1}B(t_n)+\frac{h^2}{4}\sum_{n=0}^{N-1}\big(\tau^h_n B(t_n+\tau^h_nh)+\bar{\tau}^h_n B(t_n+\bar{\tau}^h_nh)\big).
\end{split}
\end{align}
Note that to deduce the third line, we make use of CTQ rather than Euler method. The reason for this is that using Euler method will result in the same expression as $Q_h[g_B]$. It is easy to see the difference between expressions for CTQ and RTQ lies in the last two terms of the equation above.

To compute the reference solution, we first sampled a BM path with a small stepsize $h_{\mbox{ref}}=2^{-14}$. Then we generated an i.i.d. standard uniformly distributed sequence $\{\tau_j\}_{j\in \N}$, and sampled $B((j+\tau_j) h_{\mbox{ref}})$, which is determined by property of Brownian bridge (c.f. Section 3.1 in \cite{2013mc}), i.e.,
\begin{align*}
    B\big((j+\tau_j) h_{\mbox{ref}}\big)\sim \mathcal{N}\Big(\bar{\tau}_jB\big((j+1) h_{\mbox{ref}}\big)+\tau_jB(j h_{\mbox{ref}}),\tau_j\bar{\tau}_jh_{\mbox{ref}}\Big),
\end{align*}
where $\mathcal{N}(\mu,\sigma^2)$ is normal distribution with mean $\mu$ and variance $\sigma^2$, and $\bar{\tau}_j:=1-\tau_j$ for all $j$. The reference solution was thus computed via CTQ on grid points consisting of $\{j h_{\mbox{ref}}\}_{j\in \N}$ as well as these intermediate $\{(j+\tau_j) h_{\mbox{ref}}\}_{j\in \N}$. 

The reason for including randomness at this early stage is that this allows an easier sampling procedure for $\{\tau^h_j\}_{j\in \N}$ on coarser grids of stepsize $h$. For instance, if $h=2h_{\mbox{ref}}$ and consider interval $[t_0,t_0+h]$, then $t_0+\tau_0h_{\mbox{ref}}$ and $t_0+h_{\mbox{ref}}+\tau_1h_{\mbox{ref}}$ are in the same interval. Thus $\tau^h_0$ can be determined from
\begin{align*}
    t_0+\tau^h_0 h=\mathbbm{1}_{U_0}(0) (t_0+\tau_0h_{\mbox{ref}})+\mathbbm{1}_{U_0}(1)(t_0+h_{\mbox{ref}}+\tau_1h_{\mbox{ref}}),
\end{align*}
where $\mathbbm{1}_{\cdot}(\cdot)$ is the indicator function, $U_0\sim \mathcal{U}\{0,1\}$, i.e.,  a discrete uniform distribution on the integers 0 and 1 (shown in Figure \ref{fig:interval}).
  \begin{figure}[h!]
  \centering
 \includegraphics[trim=8cm 3.8cm 8cm 7cm, clip,width=1.0\textwidth]{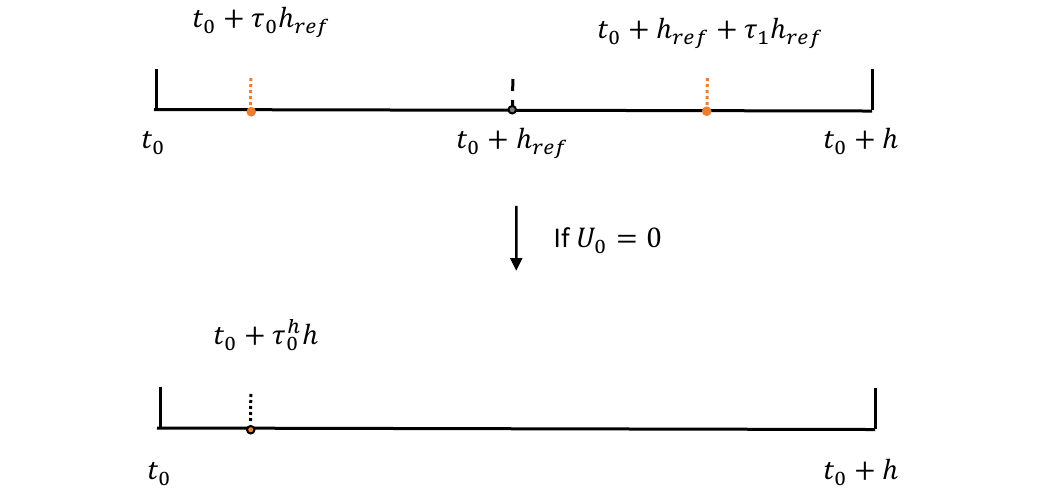}   
  \caption{An illustration of sampling $\tau^h_0$ for coarser grid of stepsize $h$ based on randomness on the finest grid of stepsize $h_{\mbox{ref}}$ under the condition that $h=2h_{\mbox{ref}}$.  \label{fig:interval}}
      \end{figure}

The
numerical approximations were calculated for both trapezoidal quadratures with larger step sizes $h \in \{2^{-i}: i= 5,\ldots, 10\}$ and then compared to the reference solution for errors. The results of our simulations are shown in Figure \ref{fig:gW_error}. RTQ gave the higher order of pathwise convergence compared to CTQ and gained a minor advantage in absolute error. Both the performances are consistent with theoretical order of convergences shown in Theorem \ref{thm:general_trap} and Theorem \ref{th3:Rie_pathwise}. We, in the meantime, examined the computational efficiency of both methods. Due to additional terms involved for RTQ in Eqn. \eqref{eqn:RTQ_bm}, its time cost roughly doubles that of CTQ at the same stepsize. In this case, unfortunately, the slight odds of RTQ in accuracy does not offsets its cost.
  \begin{figure}[h!]

 \includegraphics[width=0.49\textwidth]{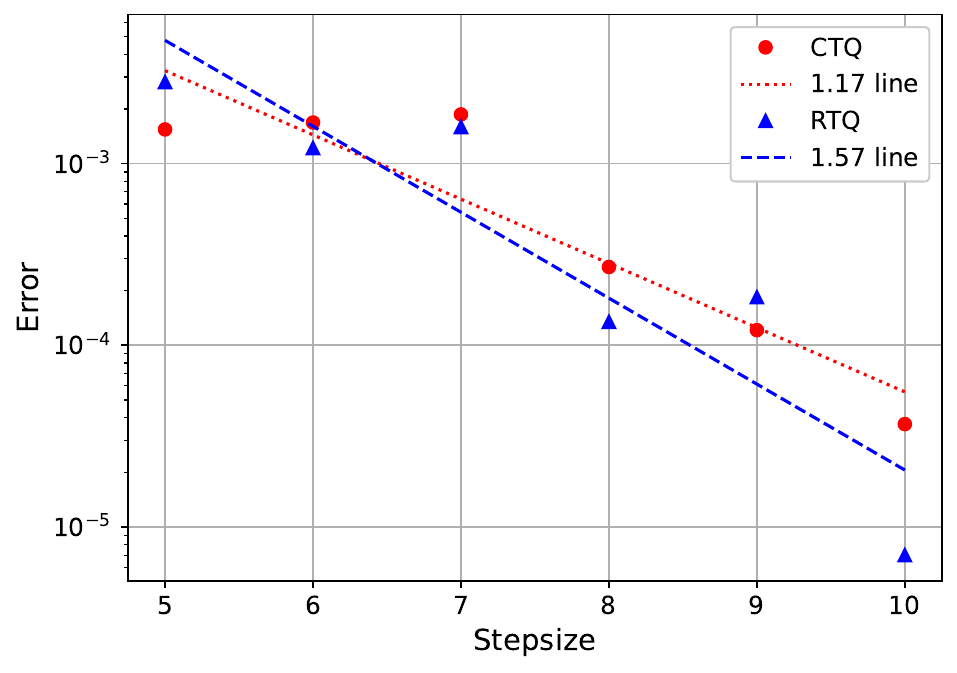}
        \includegraphics[width=0.49\textwidth]{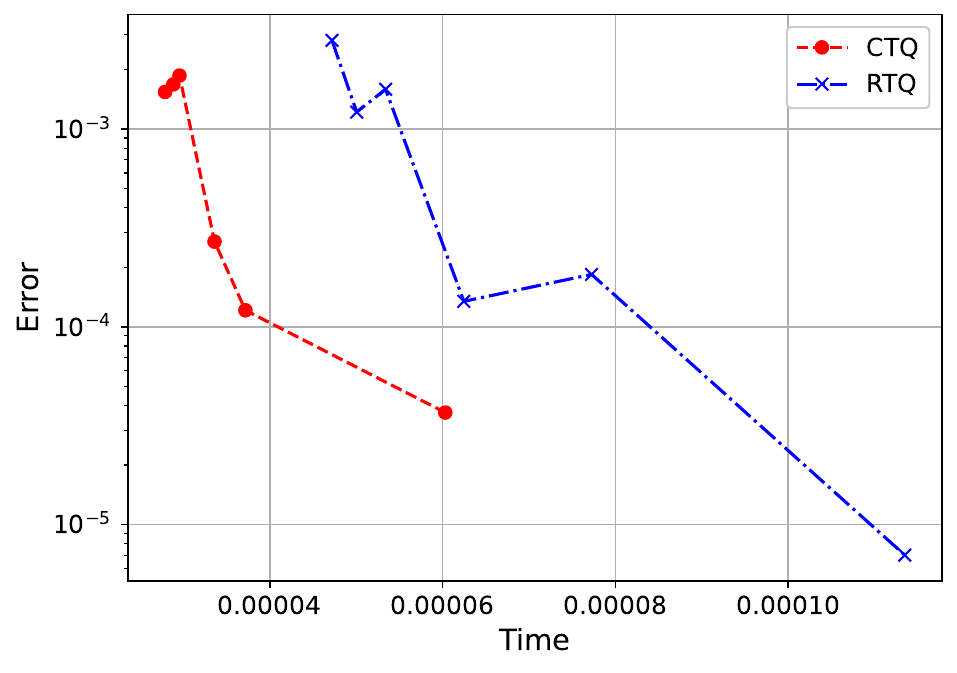} 
  \caption{Error plot (left) and time cost plot (right) for approximating $I[g_B]$ using CTQ and RTQ.  \label{fig:gW_error}}
      \end{figure}

\section*{Acknowledgements}
This work was supported by the The Alan Turing Institute under the EPSRC grant EP/N510129/1 and by EPSRC though the project EP/S026347/1, titled ’Unparameterised
multi-modal data, high order signatures, and the mathematics of data science’.


\begin{thebibliography}{1}


\bibitem{2003sobolev}
 Robert. A. Adams and Jonn. J. Fournier,
\newblock {\em Sobolev spaces}, 2003. Elsevier.


\bibitem{2000measure}
Robert B. Ash and Catherine A. Doleans-Dade,
\newblock {\em Probability and measure theory}, 2000. Academic Press.

\bibitem{2002sharp}
David Cruz-Uribe and C. J. Neugebauer,
\newblock  Sharp error bounds for the trapezoidal rule and Simpson’s rule. 
\newblock {\em Journal of Inequalities in Pure and Applied Mathematics}, 3.4 (2002): 1-222.

\bibitem{2007integration}
Philip J. Davis and Philp Rabinowitz,
\newblock {\em Methods of numerical integration}, 2007. Courier Corporation.
Davis, P.J. and Rabinowitz, P., 2007. Methods of numerical integration. Courier Corporation.

\bibitem{2018tra}
Monika Eisenmann and Raphael Kruse,
\newblock  Two quadrature rules for stochastic Ito-integrals with fractional Sobolev regularity. 
\newblock {\em Communications in Mathematical Sciences}, 16.8 (2018): 2125-2146.

\bibitem{2013mc}
Paul Glasserman,
\newblock {\em Monte Carlo methods in financial engineering}, 2013. Springer Science \& Business Media.

\bibitem{2017rk}
Raphael Kruse and Yue Wu,
\newblock  Error analysis of randomized Runge-Kutta methods for differential equations with time-irregular coefficients. 
\newblock {\em Computational Methods in Applied Mathematics
}, 17.3 (2017): 479-498.


\end{thebibliography}
\end{document}